\title{
Finite-Time Performance of Distributed Two-Time-Scale Stochastic Approximation}
\author{\Name{Thinh T. Doan} \Email{thinhdoan@gatech.edu}
\AND
 \Name{Justin Romberg} \Email{jrom@ece.gatech.edu}\\
 \addr School of Electrical and Computer Engineering\\
Georgia Institute of Technology, GA, 30332, USA}
\newcommand{\Eset}{\mathbb{E}}
\newcommand{\Rset}{\mathbb{R}}
\newcommand{\Dcal}{{\cal D}}
\newcommand{\Gcal}{{\cal G}}
\newcommand{\Kcal}{{\cal K}}
\newcommand{\Xcal}{{\cal X}}
\newcommand{\Abf}{{\bf A}}
\newcommand{\Bbf}{{\bf B}}
\newcommand{\Vbf}{{\bf V}}
\newcommand{\Wbf}{{\bf W}}
\newcommand{\Xbf}{{\bf X}}
\newcommand{\Ybf}{{\bf Y}}
\newcommand{\bbf}{{\bf b}}
\newcommand{\1}{{\mathbf{1}}}
\newcommand{\bbar}{{\bar{b}}}
\newcommand{\xbar}{{\bar{x}}}
\newcommand{\ybar}{{\bar{y}}}
\newcommand{\xibar}{{\bar{\xi}}}
\newcommand{\psibar}{{\bar{\psi}}}
\newcommand{\xhat}{{\hat{x}}}
\newcommand{\yhat}{{\hat{y}}}
\newcommand{\Xhatbf}{{\hat{\Xbf}}}
\newcommand{\Yhatbf}{{\hat{\Ybf}}}
\newtheorem{thm}{Theorem}
\newtheorem{assump}{Assumption}
\begin{document}

\maketitle

\begin{abstract}
Two-time-scale stochastic approximation is a popular iterative method for finding the solution of a system of two equations. Such methods have found broad applications in many areas, especially in machine learning and reinforcement learning. In this paper, we propose a distributed variant of this method over a network of agents, where the agents use two graphs representing their communication at different speeds due to the nature of their two-time-scale updates. Our main contribution is to provide a finite-time analysis for the performance of the proposed method. In particular, we establish an upper bound for the convergence rates of the mean square errors at the agents to zero as a function of the step sizes and the network topology. 
\end{abstract}


\section{Introduction}
Two-time-scale stochastic approximation ({\sf SA}) is a recursive algorithm for finding the solution of a system of two equations \cite{borkar2008}. In this algorithm, one iterate is updated using step sizes that are very small compared to the ones used to update the other iterate. One can view that the update associated with the small step sizes is implemented at a ``slow" time-scale, while the other is executed at a ``fast" time-scale. In this paper, our focus is to consider a distributed variant of this two-time-scale {\sf SA} in the context of multi-agent systems, where a group of agents can communicate at different speeds through two possibly different connected graphs. Our main goal is to study a finite-time analysis for the performance of the proposed method, where we provide an upper bound for its convergence rate as a function of the two step sizes and the two network topology.

Two-time-scale {\sf SA} and its distributed counterpart have received a surge of interests due to their broad applications in many areas, some examples include optimization \cite{WangFL2017,Polyak1987}, distributed optimization on multi-agent systems \cite{DoanMR2018b,DoanBS2017}, power control for wireless networks \cite{LongZLYG2007}, and especially in reinforcement learning \cite{SBbook1998, KondaT2003, Sutton2009a, leeHe2019}. In these applications, it has been observed that using two-time-scale iterations one can achieve a better performance than the one-time-scale counterpart; for example, the iterates may converge faster \cite{Polyak1987}, the algorithm performs better under communication constraints \cite{DoanBS2017,DoanMR2018b}, and the algorithm is more stable under the so-called off-policy in reinforcement learning \cite{Sutton2009a}.

The existing literature has only focused on the convergence of the centralized two-time-scale {\sf SA}. The asymptotic convergence of this two-time-scale {\sf SA} can be achieved by using the {\sf ODE} methods \cite{BorkarM2000}, while its rates of convergence has been studied in \cite{KondaT2004, DalalTSM2018, KarmakarB2018, GuptaSY2019_twoscale,DoanM2019}. In particular, the work in \cite{DalalTSM2018, KarmakarB2018} provides a concentration bound for the finite-time analysis of this method, while an asymptotic rate has been studied in \cite{KondaT2004}. Recently, a finite-time analysis for the performance of the centralized two-time-scale {\sf SA} has been provided in \cite{GuptaSY2019_twoscale} under constant step sizes and in \cite{DoanM2019} under time-varying step sizes. 

\textbf{Main Contribution}. 
In this paper, we propose a distributed variant of the linear two-time-scale {\sf SA} over a multi-agent system. Due to the two-time-scale updates, the agents use two different graphs representing their communication at two different speeds. Our focus is to provide a finite-time analysis for the proposed method. In particular, we provide an upper bound for the rates of the average of the mean square errors at the nodes to zero, as a function of the two step sizes and the two network topology. We show that this method converges at a rate $\mathcal{O}(1/(1-\sigma)^2k^{2/3})$ under some proper choice of the two step sizes, where $\sigma$ represents the slower mixing time of the two communication graphs and $k$ is the number of iterations. Our theoretical results explicitly show the impacts of the two step sizes and network topology on the performance of the proposed algorithm.           





\section{Distributed linear two-time-scale stochastic approximation} \label{sec:alg}
We consider the problem of finding the solution $(x^*,y^*)\in\Rset^{d}\times\Rset^{d}$ of a linear system of equations defined over a network of $N$ nodes. Associated with each node $i$ is a matrix $\Abf$ and a vector $\bbf^{i}$ 
\begin{align*}
\Abf = \left[\begin{array}{cc}
\Abf_{11}     &  \Abf_{12}\\
\Abf_{21}     & \Abf_{22}
\end{array}\right]\in\Rset^{2d\times 2d},\qquad \bbf^{i} = \left[\begin{array}{c}
     b_{1}^{i} \\
     b_{2}^{i} 
\end{array}\right]\in\Rset^{2d}.    
\end{align*}
The goal of the nodes is to cooperatively find the solution $(x^*,y^*)$ of the system of linear equations
\begin{align}
\Abf_{11}x^* + \Abf_{21}y^*  =   \sum_{i=1}^{N}b_{1}^{i}\qquad\text{and}\qquad \Abf_{21}x^* + \Abf_{22}y^* =  \sum_{i=1}^{N}b_{2}^{i}  .\label{notation:xy*}
\end{align}
We are interested in the situation where a central coordinator is absent, therefore, the nodes have to cooperatively solve this problem. In addition, we assume that the matrices $\Abf_{ij}$ and $b_{1}^{i}$, for all $i,j$, are unknown to node $i$ and each node can only have an access to a noisy observation of these matrices and vectors. Therefore, we consider distributed iterative methods for solving this problem. In particular, we are interested in studying the distributed variant of the linear two-time-scale {\sf SA} \cite{KondaT2004,DoanM2019,GuptaSY2019_twoscale,DalalTSM2018}, where each node $i$ maintains an estimate $(x^{i},y^{i})$ of $(x^*,y^*)$ and iteratively updates its estimates as
\begin{align}
x_{k+1}^{i} &= \sum_{j=1}^{N}w_{ij}x_{k}^{j} - \alpha_{k}(\Abf_{11}x_{k}^{i} + \Abf_{12}y_{k}^{i} - b_{1}^{i} + \xi_{k}^{i}) \label{alg:x}\\
y_{k+1}^{i} &= \sum_{j=1}^{N}v_{ij} y_{k}^{j} - \beta_{k}(\Abf_{21}x_{k}^{i} + \Abf_{22}y_{k}^{i} - b_{2}^{i} + \psi_{k}^{i}),\label{alg:y}
\end{align}
where $\beta_{k}\ll\alpha_{k}$ are two different nonnegative step sizes. In addition, $(w_{ij}, v_{ij})$ are the weights that node $i$ assigns for the iterate $(x^{j},y^{j})$ received from node $j$, a neighbor of node $i$. We denote by $\Wbf = [w_{ij}]\in\Rset^{N\times N}$ and $\Vbf = [v_{ij}]\in\Rset^{N\times N}$ the two adjacency matrices  imposed the communication structures between the nodes, that is, nodes $i$ and $j$ can interact with each other if and only if $w_{ij} > 0$ or $v_{ij} > 0$. Note that $\Wbf$ and $\Vbf$ can represent two different graphs, i.e., the nodes can exchange information in different speeds. In addition, $\{\xi_{k}^{i},\psi_{k}^{i}\}$ are the noise sequences corresponding to observations at each node $i$. Here, the goal of the nodes is to obtain $(x^*,y^*)$, i.e., 
\begin{align*}
\lim_{k\rightarrow\infty} x_{k}^{i} = x^*\qquad\text{and}\qquad \lim_{k\rightarrow\infty} y_{k}^{i} = y^*\qquad \text{a.s.},\quad \forall i\in[1,N].
\end{align*}
Here $\beta_{k}$ is much smaller than $\alpha_{k}$, implying that $x_{k}^{i}$ is updated at a faster time scale than $y_{k}^{i}$. Finally, the adjacency matrices $\Wbf,\Vbf$ is used to present different communication speeds between the nodes.

\subsection{Motivating applications}
We are motivated by the wide applications of \eqref{alg:x} and \eqref{alg:y} in many applications, especially the recent interests in multi-agent reinforcement learning \cite{Mathkar2017_GossipRL,DoanMR2019_DTD(0),DoanMR2019_DTD, ZhangYB2019, YangZHB2018, Kar2013_QDLearning,Wai2018_NIPS,DingOthers2019}. One fundamental and important problem in this area is to estimate the total accumulative return rewards of a stationary policy using linear function approximations, which is referred to as the policy evaluation problems. In this context,  two-time-scale algorithms (e.g., gradient temporal difference learning ({\sf GTD})) have been observed to be more stable and perform better compared to the single-time-scale counterpart (e.g., temporal difference learning ({\sf TD})) in the so-called off-policy settings; see for example \cite{Sutton2009a,Sutton2009b}. Motivated by the distributed variant of {\sf TD} studied in \cite{DoanMR2019_DTD(0)}, we consider a distributed version of {\sf GTD} formulated under the forms of \eqref{alg:x} and \eqref{alg:y}. In particular, a team of agents act in a common environment, get rewarded, update their local estimates of the value function, and then communicate with their neighbors. Let $X_{k}$ be the state of environment, $\gamma$ be the discount factor, $\phi(X_{k})$ be the feature vector of state $X_k$, and $R^{i}(\cdot)$ be the local reward return at agent $i$. Given a sequence of samples $\{X_{k}\}$, the updates at the agents can be viewed as a distributed variant of the {\sf GTD} studied in \cite{Sutton2009b} given as 
\begin{align*}
x_{k+1}^{i} &= \sum_{j=1}^{N}w_{ij}x_{k}^{j} - \alpha_{k}\Big(\phi(X_k)^Tx_{k}^{i} + \big[ \phi(X_{k}) - \gamma\phi(X_{k+1}) \big]^Ty_{k}^{i} - R^{i}(X_{k})\Big)\phi(X_k)\\
y_{k+1}^{i} &= \sum_{j=1}^{N}v_{ij} y_{k}^{j} - \beta_{k}\big[\gamma\phi(X_{k+1})-\phi(X_{k})\big]\phi^T(X_k)x_{k}^{i}.
\end{align*}
At each agent $i$, $y_{k}^{i}$ is the main variable used to estimate the optimal solution, while $x_{k}^{i}$ is an additional auxiliary variable. To put these updates in the form of \eqref{alg:x} and \eqref{alg:y}, we introduce the notation 
\begin{align*}
&\Abf_{11}(X_{k}) = \phi(X_{k})\phi^T(X_k),\; \Abf_{12}(X_{k}) = \phi(X_k)\big[\phi(X_{k})-\gamma\phi(X_{k+1})\big]^T,\; b_{1}^{i}(X_{k}) =  R^{i}(X_k)\phi(X_{k})\\
&\Abf_{21}(X_{k}) = [\gamma\phi(X_{k+1})-\phi(X_{k})]\phi^T(X_k),\quad \Abf_{22}(X_{k}) = 0,\quad b_{2}^{i}(X_{k}) = 0.
\end{align*}
In addition, we denote by $\Abf_{\ell u} = \Eset[\Abf_{\ell u}(X_{k})]$ and $b_{\ell}^{i} = \Eset[b_{\ell}^{i}(X_k)]$, for all $\ell,u = 1,2$ and $i\in[1,N]$. One can reformulate the distributed {\sf GTD} above by introducing $\xi_{k}^{i}$ and $\psi_{k}^i$ as
\begin{align*}
&\xi_{k}^{i} = [\Abf_{11}(X_{k}) - \Abf_{11}]x_{k}^{i} + [\Abf_{12}(X_{k})-\Abf_{12}] y_{k}^{i} + b_{1}^{i}(X_k) - b_{1}^{i},\quad\psi_{k}^{i} = [\Abf_{21}(X_{k}) - \Abf_{21}]x_{k}^{i}
\end{align*}
Let $b_{1} = 1/N\sum_{i}b_{1}^{i}$. The goal of the distributed {\sf GTD} is tried to have all $(x_{k}^{i},y_{k}^{i})$ converge to $(x^*,y^*)$, where $x^* = \Abf_{11}^{-1}\left(\Abf_{21}^Ty^* + b_{1}\right)$ and $y^* = \Abf_{12}^{-1}b_{1}$.\\
Another motivating example of using such distributed two-time-scale algorithms \ref{alg:x} and \ref{alg:y} is to solve distributed optimization problems under communication constraints, where another step size in addition to the one associated with the gradients of the functions is introduced to stabilize the algorithm due to the imperfect communication between agents \cite{DoanBS2017,DoanMR2018b}. Finally, the distributed two-time-scale method studied in this paper can be used to solve a convex relaxation of the popular pose graph estimation in robotic networks  \cite{Choudhary2016}. 

\subsection{Assumptions and notation}
We introduce in this section various assumptions, which are necessary to our analysis given later. We first state an assumption on the matrices $\Abf_{ij}$ to guarantee the existence and uniqueness of  $(x^*,y^*)$. 
\begin{assump}\label{assump:matrix_negativity}
The matrices $\Abf_{11}$ and $\Delta =  \Abf_{22} - \Abf_{21}\Abf_{11}^{-1}\Abf_{12}$ are positive definite but not necessarily symmetric, i.e., their eigenvalues are strictly positive.
\end{assump}
One can relax Assumption \ref{assump:matrix_negativity} to cover the case of complex eigenvalues, i.e., $\Abf_{11}$ and  $\Delta$ have eigenvalues with strictly positive real parts. To simplify the notation of our analysis we, however, assume that these matrices are positive definite. An extension of this work to the case of complex eigenvalues is straightforward; see for example \cite{KondaT2004, DoanM2019}.    

\begin{assump}\label{assump:matrix_bounded}
All the matrices $\Abf_{ij}$ and vectors $b_{1}^{i}$ are uniformly bounded, i.e., $\|\Abf_{ij}\|\leq 1$ and there exists a positive constant $R$ such that $\max\{\|b_{1}^{i}\|,\|b_{2}^{i}\|\}\leq R$ for all $i\in[1,N]$.   
\end{assump}

\begin{assump}\label{assump:doub_stoch}
The matrix $\Wbf$, whose $(i,j)$-th entries are $w_{ij}$, is doubly stochastic with positive diagonal, i.e., $\sum_{j=1}^n w_{ij} = \sum_{i=1}^n w_{ij} = 1$. Moreover, the graph $\Gcal_{\Wbf}$ associated with $\Wbf$ is connected, and $w_{ij} > 0$ if and only if $(i, j)$ is an edge of $\Gcal_{\Wbf}$. Similar conditions are assumed for $\Vbf$.  
\end{assump} 

\begin{assump}\label{assump:noise}
The sequence of random variables $(\xi_{k}^{i},\psi_{k}^{i})$, for all $i\in[1,N]$ and $k\geq0$, is independent of each other, with zero mean and uniformly bounded, i.e., there exists a positive constant $C$ s.t. $\max\{\|\xi_{k}^i\|,\|\psi_{k}^{i}\|\}\leq C$ for all $i\in[1,N]$. Moreover, they have common variances given as 
\begin{align}
\begin{aligned}
\Eset[(\xi_{k}^i)^T\xi_{k}^{i}] = \Gamma_{11},\quad\Eset[(\xi_{k}^i)^T\psi_{k}^{i}] = \Gamma_{12} = \Gamma_{21}^T,\quad\Eset[(\psi_{k}^i)^T\psi_{k}^{i}] = \Gamma_{22}.
\end{aligned}
\label{assump:variance}
\end{align}
\end{assump}
Assumption \ref{assump:matrix_bounded} can be guaranteed through a proper scaling step, while Assumption \ref{assump:doub_stoch} is a standard assumption in distributed consensus algorithms \cite{DoanMR2018c}. Finally, we consider the noise model similar to the one in \cite{KondaT2004,DoanM2019}. 

We denote by $\Xbf,\Ybf\in\Rset^{N\times d}$ the matrices whose $i-$th rows are $(x^{i})^{T}$ and $(y^{i})^{T}$ in $\Rset^{1\times d}$, respectively. Then, the matrix forms of Eqs.\ \eqref{alg:x} and \eqref{alg:y} are given as
\begin{align}
\Xbf_{k+1} &= \Wbf\Xbf_{k} -\alpha_{k}\left(\Xbf_{k}\Abf_{11}^T + \Ybf_{k}\Abf_{12}^T - \Bbf_{1} +\Xi_{k}\right)\label{alg:X}\\
\Ybf_{k+1} &= \Vbf\Ybf_{k} -\alpha_{k}\left(\Xbf_{k}\Abf_{21}^T + \Ybf_{k}\Abf_{22}^T - \Bbf_{2} +\Psi_{k}\right),\label{alg:Y}
\end{align}
where $\Bbf_{1}, \Bbf_{2}, \Xi_{k},$ and $\Psi_{k}$ are the matrices whose $i-$th rows are $(b_{1}^{i})^T,(b_{2}^{i})^T,(\xi_{k}^{i})^T$, and $(\psi_{k}^{i})^T$, respectively.  Given a collection of $x^{1},\ldots,x^{N}$, we use $\xbar$ to denote its average, i.e., $\xbar = \frac{1}{N}\sum_{i=1}^{N} x^i$. Thus, since $\Wbf$ and $\Vbf$ are doubly stochastic matrices and by Eqs.\ \eqref{alg:x} and \eqref{alg:y} we have
\begin{align}
\xbar_{k+1} &= \xbar_{k} - \alpha_{k}\left(\Abf_{11}\xbar_{k} + \Abf_{12}\ybar_{k} - \bbar_{1} + \xibar_{k}\right)\label{alg:xbar}\\
\ybar_{k+1} &= \ybar_{k} - \alpha_{k}\left(\Abf_{21}\xbar_{k} + \Abf_{22}\ybar_{k} - \bbar_{2} + \psibar_{k}\right).\label{alg:ybar}
\end{align}


\section{Finite-time bounds of distributed linear two-time-scale {\sf SA}}\label{sec:results}
We present here the convergence rates of the distributed linear two-time-scale {\sf SA}, where we provide an upper bound for the rates of the average of the mean square errors at the nodes to zero. Our result shows that this quantity decays to zero at a rate $\mathcal{O}(1/(k+1)^{2/3})$. In addition, it also depends on the network topology represented by $1-\sigma$, the algebraic network connectivity of two graphs. 

We first introduce a bit more notation. We denote by $\sigma_{\Wbf}$ and $\sigma_{\Vbf}$ the second larges singular values of $\Wbf$ and $\Vbf$, respectively. By Assumption \ref{assump:doub_stoch} we have $\sigma_{\Wbf},\sigma_{\Vbf}\in(0,1)$; see for example \cite{GR2001}. In addition, we denote by $\sigma$, the slower mixing speed of these two graphs
\begin{align}
\sigma \triangleq \max\{\sigma_{\Wbf},\sigma_{\Vbf}\} \in (0,1).\label{notation:sigma}
\end{align}
Let $\delta \in(\sigma,1)$ and denote by $\Kcal^*$ a positive integer such that
\begin{align}
\Kcal^* \geq \left\lceil(\alpha_{0}/(\delta-\sigma))^{3/2}\right\rceil.   \label{notation:K*}
\end{align}
Finally, since $\lim_{k\rightarrow\infty}\sigma^{k}(k+1) = 0$, without loss of generality we assume that $\sigma^{k} \leq \frac{1}{k+1}$. Our main result, the rate of the distributed two-time-scale {\sf SA}, is stated in the following theorem. 

\begin{thm}\label{thm:rates}
Suppose that Assumptions \ref{assump:matrix_negativity}--\ref{assump:noise} hold. Let $\{x_{k}^{i},y_{k}^{i}\}$, for all $i\in[1,N]$, be generated by \eqref{alg:x} and \eqref{alg:y} with $x_{0}^{i} = y^{i}_{0} = 0$. Let $\{\alpha_{k},\beta_{k}\}$ be the sequence of step sizes chosen as
\begin{align}
\alpha_{k} = \frac{\alpha_{0}}{(k+1)^{2/3}},\qquad \beta_{k} = \frac{\beta_{0}}{k+1}\cdot \label{thm_rates:stepsizes}
\end{align} 
Then, there exits constants $\Dcal,\Dcal_{0},\Dcal_{1}$ given in Lemmas \ref{lem:consensus} and \ref{lem:optimal} below such that
\begin{align}
&\frac{1}{N}\sum_{i=1}^{N}\left(\Eset[\|y_{k}^{i}-y^*\|^2] + \frac{\beta_{k}}{\alpha_{k}}\Eset[\|x_{k}^{i}-x^*\|^2]\right)\notag\\
&\leq \frac{16\Dcal^2\beta_{0}\alpha_{0}\ln^2(\Kcal^*)\sigma^{-2\Kcal^*}}{N(1-\sigma)^2(k+1)^{2/3}} + \frac{16\Dcal^2\beta_{0}\alpha_{0}}{N(1-\sigma)^2(k+2)^{5/3}} +  \frac{2\Dcal_{0}}{(k+1)^{2/3}} + \frac{2\Dcal_{1}\ln(k+1)}{k+1}\cdot\label{thm_rates:Ineq}
\end{align}
\end{thm}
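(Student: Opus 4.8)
The plan is to split the per-node error into a \emph{consensus} part and an \emph{optimality} part, bound each through the two lemmas, and recombine. For each node $i$ I would use the elementary inequality
\[
\Eset[\|y_{k}^{i}-y^*\|^2] \le 2\Eset[\|y_{k}^{i}-\ybar_{k}\|^2] + 2\Eset[\|\ybar_{k}-y^*\|^2],
\]
and the analogue for $x$, where $\xbar_{k},\ybar_{k}$ are the node averages. Summing over $i$ and weighting the $x$-terms by $\beta_{k}/\alpha_{k}$ splits the left-hand side of \eqref{thm_rates:Ineq} into (i) the averaged consensus errors $\frac1N\sum_i\Eset[\|y_{k}^{i}-\ybar_{k}\|^2]$ and $\frac{\beta_{k}}{N\alpha_{k}}\sum_i\Eset[\|x_{k}^{i}-\xbar_{k}\|^2]$, and (ii) the optimality errors $\Eset[\|\ybar_{k}-y^*\|^2]$ and $\frac{\beta_{k}}{\alpha_{k}}\Eset[\|\xbar_{k}-x^*\|^2]$ of the averaged iterate. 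The crucial structural fact, already visible in \eqref{alg:xbar}--\eqref{alg:ybar}, is that because $\Wbf$ and $\Vbf$ are doubly stochastic the averaged iterate obeys \emph{exactly} the centralized two-time-scale recursion; consequently the two parts decouple and can be bounded independently.

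For the consensus part I would establish the content of Lemma \ref{lem:consensus}. Forming the deviation matrices $\Xbf_{k}-\1\xbar_{k}^{\top}$ and $\Ybf_{k}-\1\ybar_{k}^{\top}$ and subtracting \eqref{alg:xbar}--\eqref{alg:ybar} from \eqref{alg:X}--\eqref{alg:Y} yields a linear recursion driven by $(\Wbf-\tfrac1N\1\1^{\top})$ and $(\Vbf-\tfrac1N\1\1^{\top})$, whose spectral norms equal $\sigma_{\Wbf},\sigma_{\Vbf}$ and are therefore at most $\sigma$. Unrolling from the initial condition $\Xbf_{0}=\Ybf_{0}=0$ produces a geometric mixing factor $\sigma^{k-t}$ against a forcing term of size $\mathcal{O}(\alpha_{t})$ coming from the (uniformly bounded, by Assumptions \ref{assump:matrix_bounded} and \ref{assump:noise}) gradient and noise increments. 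Estimating $\sum_{t}\sigma^{k-t}\alpha_{t}$ is where $\Kcal^*$ enters: I would split the sum at $\Kcal^*$, use a crude geometric/logarithmic bound giving the $\sigma^{-2\Kcal^*}\ln^2(\Kcal^*)$ factor on the transient block, and on the tail use the defining inequality \eqref{notation:K*} together with $\sigma^{k}\le 1/(k+1)$, producing the two $1/(1-\sigma)^2$ terms in \eqref{thm_rates:Ineq}.

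For the optimality part I would prove Lemma \ref{lem:optimal} by a two-time-scale Lyapunov argument on the centralized recursion \eqref{alg:xbar}--\eqref{alg:ybar}. Introduce the fast tracking error $\xhat_{k}=\xbar_{k}-\Abf_{11}^{-1}(\bbar_{1}-\Abf_{12}\ybar_{k})$, the value to which the fast variable converges for a frozen $\ybar_{k}$; in these coordinates the slow error $\tilde y_{k}=\ybar_{k}-y^*$ is governed by the Schur complement $\Delta$, positive definite by Assumption \ref{assump:matrix_negativity}. Taking the weighted Lyapunov function $V_{k}=\Eset[\|\tilde y_{k}\|^2]+\tfrac{\beta_{k}}{\alpha_{k}}\Eset[\|\xhat_{k}\|^2]$ — matching the left-hand side of \eqref{thm_rates:Ineq} — I would derive a one-step drift $V_{k+1}\le(1-c\beta_{k})V_{k}+C_{1}\alpha_{k}\beta_{k}+C_{2}\beta_{k}^{2}$, where positive definiteness of $\Abf_{11}$ and $\Delta$ supplies the contraction, Young's inequality absorbs the cross terms, and the forcing collects the weighted fast noise variance $\sim\alpha_{k}\beta_{k}$ and the slow noise variance $\sim\beta_{k}^{2}$ from \eqref{assump:variance}. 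With $\alpha_{k}=\alpha_{0}/(k+1)^{2/3}$ and $\beta_{k}=\beta_{0}/(k+1)$ one has $\alpha_{k}\beta_{k}=\mathcal{O}((k+1)^{-5/3})$ and $\beta_{k}^{2}=\mathcal{O}((k+1)^{-2})$; unrolling the linear recursion then yields the $\Dcal_{0}/(k+1)^{2/3}$ term from the first forcing and the $\Dcal_{1}\ln(k+1)/(k+1)$ term from the second. Since $\xbar_{k}-x^*=\xhat_{k}-\Abf_{11}^{-1}\Abf_{12}\tilde y_{k}$, the same $V_{k}$ also controls $\frac{\beta_{k}}{\alpha_{k}}\Eset[\|\xbar_{k}-x^*\|^2]$.

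The main obstacle is closing the coupled drift inequality. Because the tracking target $\Abf_{11}^{-1}(\bbar_{1}-\Abf_{12}\ybar_{k})$ moves at every step, $\xhat_{k+1}$ acquires an extra increment proportional to $\ybar_{k+1}-\ybar_{k}=\mathcal{O}(\beta_{k})$, and the cross terms between $\xhat_{k}$ and $\tilde y_{k}$ must be dominated so that the net coefficient of $V_{k}$ remains $1-c\beta_{k}$ with $c>0$. This forces the time-scale separation $\beta_{k}\ll\alpha_{k}$, so that $\beta_{k}/\alpha_{k}$ is small enough for the fast dynamics to equilibrate relative to the slow ones, and is precisely why the Lyapunov weight $\beta_{k}/\alpha_{k}$ is the correct choice. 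Once this drift closes, combining Lemmas \ref{lem:consensus} and \ref{lem:optimal} through the decomposition above and collecting the constants $\Dcal,\Dcal_{0},\Dcal_{1}$ gives \eqref{thm_rates:Ineq}.
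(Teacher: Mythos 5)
Your overall architecture coincides with the paper's: the per-node error is split by $(a+b)^2\le 2a^2+2b^2$ into a consensus part and an optimality part, the doubly stochastic weights make $(\xbar_k,\ybar_k)$ obey exactly the centralized recursion \eqref{alg:xbar}--\eqref{alg:ybar}, and the optimality part is exactly Lemma \ref{lem:optimal} (which the paper simply imports from the earlier centralized analysis, whereas you sketch its Lyapunov proof; that sketch is consistent with the cited result). The gap is in the consensus part, i.e., in your version of Lemma \ref{lem:consensus}.

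Concretely, you unroll the deviation recursion ``against a forcing term of size $\mathcal{O}(\alpha_t)$,'' i.e., you treat both the fast and the slow deviations as driven at the fast time scale. That gives $\|\Yhatbf_k\|=\mathcal{O}\big(\alpha_{k/2}/(1-\sigma)\big)$ and hence $\sum_i\|y_k^i-\ybar_k\|^2=\mathcal{O}\big(\alpha_0^2/((1-\sigma)^2(k+1)^{4/3})\big)$, which is strictly weaker than the $\beta_0\alpha_0/((1-\sigma)^2(k+2)^{5/3})$ term in Lemma \ref{lem:consensus}; it can only be absorbed into the $(k+1)^{-2/3}$ term with a different, larger constant (lacking the $\beta_0$ factor), so the bound \eqref{thm_rates:Ineq} as stated does not follow. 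The missing idea is to treat each deviation at its own time scale: the recursion for $\Yhatbf_k$ is contracted by $\sigma_{\Vbf}$ and forced only at size $\beta_k\big(\|\Xhatbf_k\|+\sqrt{N}(R+C)\big)$. The paper packages this via the weighted residual $V_k=\|\Yhatbf_k\|+\frac{\beta_k}{\alpha_k}\|\Xhatbf_k\|$: since $\frac{\beta_k}{\alpha_k}\cdot\alpha_k=\beta_k$, every $\alpha_k$-size term in the $x$-recursion is converted to a $\beta_k$-size term, yielding $V_{k+1}\le\sigma V_k+\Dcal\beta_k$ for $k\ge\Kcal^*$, whence $\|\Yhatbf_k\|^2$ decays like $(k+2)^{-2}$ and $\frac{\beta_k}{\alpha_k}\|\Xhatbf_k\|^2\le\frac{\alpha_k}{\beta_k}V_k^2$ decays like $(k+2)^{-5/3}$, exactly the two tail terms of \eqref{thm_rates:Ineq}. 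Two further points you gloss over: closing the $V_k$-recursion requires the coupling term $2\beta_k(\|\Xhatbf_k\|+\|\Yhatbf_k\|)$ to be $\mathcal{O}(\beta_k)$, which needs the preliminary uniform bound $\|\Xhatbf_k\|+\|\Yhatbf_k\|\le 6\sqrt{N}(R+C)\alpha_0(\Kcal^*)^{1/3}/(1-\delta)$; and the geometric factor $\sigma^{k-t}$ you invoke is not available for early iterations at all, because the one-step factor is $\sigma+2\alpha_k$, which may exceed $1$. This is the actual role of $\delta$ and of the definition \eqref{notation:K*} of $\Kcal^*$ (guaranteeing $\sigma+2\alpha_k\le\delta<1$ for $k\ge\Kcal^*$) — not the splitting of the sum $\sum_t\sigma^{k-t}\alpha_t$ as you suggest — and it is why the transient constant $\Dcal$ carries the factor $(\Kcal^*)^{1/3}/(1-\delta)$.
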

More details about the choice of the two step sizes can be found in \cite{DoanM2019}.



\section{Convergence analysis}
We now present the analysis for the results presented in Theorem \ref{thm:rates}. Our analysis is composed of two main steps. We first show that the estimates $x_{k}^{i}$ and $y_{k}^{i}$ converge to their averages $\xbar_{k}$ and $\ybar_{k}$, respectively. We provide an upper bound for the rates of this convergence. This step is done through considering a residual function, which takes into account the coupling between the two step sizes
\begin{align}
V_{k} = \|\Ybf_{k}-\1\ybar_{k}^T\| + \frac{\beta_{k}}{\alpha_{k}} \|\Xbf_{k} - \1\xbar_{k}^{T}\|.\label{def:lyapunov_consensus}   
\end{align}
Second, we study the convergence of $\xbar_{k}$ and $\ybar_{k}$ to the solutions $x^*$ and $y^*$, respectively. One can view the updates of \eqref{alg:xbar} and \eqref{alg:ybar} as a centralized approach for solving \eqref{notation:xy*}. We, therefore, utilize the results in our previous work to have such convergence \cite{DoanM2019}. Due to the space limit, we skip the analysis of the second step and refer interested readers to \cite{DoanM2019} for more details. Our focus here is to provide the analysis for the first step as follows.    

\begin{lemma}\label{lem:consensus}
Suppose that all assumptions and step sizes in Theorem \ref{thm:rates} hold. Denote by $\Dcal$ a constant 
\begin{align}
\Dcal \triangleq \frac{2\sqrt{N}(R+C)(6\alpha_{0}+1)(\Kcal^*)^{1/3}}{1-\delta},\label{notation:D}
\end{align}
where $\Kcal^*$ is defined in \eqref{notation:K*}. Then we obtain for all $k\geq 0$
\begin{align}
&\sum_{i=1}^N \Big(\|y_{i}^{k}-\ybar_{k}\|^2 + \frac{\beta_{k}}{\alpha_{k}}\|x_{i}^{k}-\xbar_{k}\|^2\Big)\leq \frac{8\Dcal^2\beta_{0}\alpha_{0}\ln^2(\Kcal^*)\sigma^{-2\Kcal^*}}{(1-\sigma)^2(k+1)^{2/3}} + \frac{8\Dcal^2\beta_{0}\alpha_{0}}{(1-\sigma)^2(k+2)^{5/3}}\cdot    \label{lem_consensus:Ineq}
\end{align}
\end{lemma}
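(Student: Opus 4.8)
The plan is to track the two consensus errors through the deviation matrices $\Xbf_k-\1\xbar_k^T$ and $\Ybf_k-\1\ybar_k^T$, whose rows are the node-wise gaps $x_k^i-\xbar_k$ and $y_k^i-\ybar_k$, so that (with $\|\cdot\|$ the Frobenius norm) the left-hand side of \eqref{lem_consensus:Ineq} equals $\|\Ybf_k-\1\ybar_k^T\|^2+\tfrac{\beta_k}{\alpha_k}\|\Xbf_k-\1\xbar_k^T\|^2$. The starting point is the contraction property of the doubly stochastic matrices of Assumption \ref{assump:doub_stoch}: since every column of a deviation matrix is orthogonal to $\1$ and $\sigma_{\Wbf}$ (resp.\ $\sigma_{\Vbf}$) is the second largest singular value, we have $\|\Wbf(\Xbf_k-\1\xbar_k^T)\|\le\sigma_{\Wbf}\|\Xbf_k-\1\xbar_k^T\|$, and likewise for $\Vbf$. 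First I would subtract the averaged recursions \eqref{alg:xbar}--\eqref{alg:ybar} from the matrix recursions \eqref{alg:X}--\eqref{alg:Y}; because $\Wbf\1=\1$ and $\Vbf\1=\1$, the mixing term collapses to $\Wbf(\Xbf_k-\1\xbar_k^T)$ while the drift and noise terms become their own deviations, such as $\Xi_k-\1\xibar_k^T$.

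Taking norms and using Assumption \ref{assump:matrix_bounded} ($\|\Abf_{ij}\|\le1$) together with the contraction yields a coupled pair of one-step inequalities, schematically $\|\Xbf_{k+1}-\1\xbar_{k+1}^T\|\le(\sigma_{\Wbf}+\alpha_k)\|\Xbf_k-\1\xbar_k^T\|+\alpha_k\|\Ybf_k-\1\ybar_k^T\|+\alpha_k G$ and symmetrically for $\Ybf$ with $\beta_k$, where the forcing constant $G=2\sqrt{N}(R+C)$ bounds the projected bias and noise: each deviation such as $\|\Bbf_1-\1\bbar_1^T\|$ or $\|\Xi_k-\1\xibar_k^T\|$ is at most its own Frobenius norm, hence $\le\sqrt{N}R$ or $\le\sqrt{N}C$ by Assumptions \ref{assump:matrix_bounded} and \ref{assump:noise} since projection onto the orthogonal complement of $\1$ is non-expansive. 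The next step combines these into the single residual $V_k$ of \eqref{def:lyapunov_consensus}. Here I must respect the time-varying ratio $\beta_k/\alpha_k=(\beta_0/\alpha_0)(k+1)^{-1/3}$, using that it is non-increasing and that $(\beta_{k+1}/\alpha_{k+1})\alpha_k\le\beta_k$; absorbing the cross terms and bounding $\beta_k\|\Xbf_k-\1\xbar_k^T\|=\alpha_k\cdot(\beta_k/\alpha_k)\|\Xbf_k-\1\xbar_k^T\|\le\alpha_k V_k$ gives a scalar recursion of the form $V_{k+1}\le(\sigma+c\alpha_k)V_k+\beta_k G$ for a small absolute constant $c$, with $\sigma=\max\{\sigma_{\Wbf},\sigma_{\Vbf}\}$.

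Since $x_0^i=y_0^i=0$ forces $V_0=0$, unrolling this recursion gives $V_k\le G\sum_{t=0}^{k-1}\beta_t\prod_{s=t+1}^{k-1}(\sigma+c\alpha_s)$, and the crux of the argument is estimating this weighted sum of products. The factor $c\alpha_s$ is not uniformly dominated by $1-\sigma$, so I would split at $\Kcal^*$: the definition \eqref{notation:K*} is chosen exactly so that $\alpha_s\le\delta-\sigma$, hence $\sigma+c\alpha_s\le\delta<1$, for all $s\ge\Kcal^*$, producing geometric decay $\delta^{k-\Kcal^*}$ with a $1/(1-\delta)$ factor from the tail. The finitely many early factors ($s<\Kcal^*$) may exceed one but are controlled by a constant depending on $\alpha_0$ and $(\Kcal^*)^{1/3}$, and the early weights contribute $\sum_{t<\Kcal^*}\beta_t\le\beta_0(1+\ln\Kcal^*)$, giving the $\ln(\Kcal^*)$ factor; bounding $\delta^{-\Kcal^*}\le\sigma^{-\Kcal^*}$ and converting the residual geometric decay to a polynomial rate via the normalization $\sigma^k\le1/(k+1)$ produces the $\sigma^{-2\Kcal^*}$ and $(k+1)^{-2/3}$ behavior, while the late block $t\ge\Kcal^*$ yields the faster $(k+2)^{-5/3}$ term. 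I expect this product estimate --- separating the pre-stabilization transient from the contracting tail and turning exponential into polynomial decay with explicit, clean constants --- to be the main obstacle.

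Finally, I would pass from the linear bound on $V_k$ to the quadratic quantity in \eqref{lem_consensus:Ineq}. From $\|\Ybf_k-\1\ybar_k^T\|\le V_k$ and $(\beta_k/\alpha_k)\|\Xbf_k-\1\xbar_k^T\|\le V_k$ one obtains, using $\alpha_k/\beta_k\ge1$, the inequality $\|\Ybf_k-\1\ybar_k^T\|^2+\tfrac{\beta_k}{\alpha_k}\|\Xbf_k-\1\xbar_k^T\|^2\le\tfrac{\alpha_k}{\beta_k}V_k^2=\tfrac{\alpha_0}{\beta_0}(k+1)^{1/3}V_k^2$; substituting the bound on $V_k$ and simplifying the resulting powers of $(k+1)$ (the extra $(k+1)^{1/3}$ downgrading the $(k+1)^{-1}$ scaling of the leading part of $V_k^2$ into the claimed $(k+1)^{-2/3}$) yields \eqref{lem_consensus:Ineq} with the constant $\Dcal$ of \eqref{notation:D}.
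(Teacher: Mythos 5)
Your proposal follows the paper's architecture almost step for step: the deviation matrices and Frobenius norms, the second-singular-value contraction, the coupled one-step inequalities, the residual $V_k$ of \eqref{def:lyapunov_consensus} with the nonincreasing ratio $\gamma_k=\beta_k/\alpha_k$, the split of iterations at $\Kcal^*$, and the final weighted passage to squares (your inequality $\|\Ybf_k-\1\ybar_k^T\|^2+\tfrac{\beta_k}{\alpha_k}\|\Xbf_k-\1\xbar_k^T\|^2\le\tfrac{\alpha_k}{\beta_k}V_k^2$ is correct and is exactly how the paper converts the linear bound on $V_k$ into \eqref{lem_consensus:Ineq}). The one place you genuinely diverge is how the $V_k$ recursion is closed. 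You absorb the cross terms into the contraction factor, via $\beta_k\|\Xhatbf_k\|=\alpha_k\gamma_k\|\Xhatbf_k\|\le\alpha_kV_k$ and $\beta_k\|\Yhatbf_k\|\le\alpha_kV_k$, obtaining the self-contained recursion $V_{k+1}\le(\sigma+c\alpha_k)V_k+G\beta_k$. That step is valid. The paper instead runs a two-stage argument: it first sums the two one-step inequalities to get $\|\Xhatbf_{k+1}\|+\|\Yhatbf_{k+1}\|\le(\sigma+2\alpha_k)\big(\|\Xhatbf_k\|+\|\Yhatbf_k\|\big)+2\sqrt{N}(R+C)\alpha_k$, unrolls this to a \emph{uniform} bound of order $\sqrt{N}(R+C)\alpha_0(\Kcal^*)^{1/3}/(1-\delta)$ on the deviations, and only then plugs that bound into the $V$ recursion, so the cross terms land in the additive constant, giving $V_{k+1}\le\sigma V_k+\Dcal\beta_k$ with the \emph{exact} factor $\sigma$.

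This structural difference is why your closing claim, that your route ``yields \eqref{lem_consensus:Ineq} with the constant $\Dcal$ of \eqref{notation:D},'' does not go through. Two concrete issues. First, in your route the pre-$\Kcal^*$ transient is multiplicative: $\prod_{s<\Kcal^*}(\sigma+c\alpha_s)\le\exp\big(c\sum_{s<\Kcal^*}\alpha_s\big)\le\exp\big(3c\,\alpha_0(\Kcal^*)^{1/3}\big)$, i.e.\ exponential in $\alpha_0(\Kcal^*)^{1/3}$, whereas the paper's transient is additive ($2\sqrt{N}(R+C)\sum_{t<\Kcal^*}\alpha_t\le6\sqrt{N}(R+C)\alpha_0(\Kcal^*)^{1/3}$), which is precisely why $\Dcal$ is \emph{linear} in $(\Kcal^*)^{1/3}$. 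Second, after $\Kcal^*$ your contraction factor is $\delta$, not $\sigma$, so your geometric terms decay like $\delta^{k/2}$; the paper's normalization $\sigma^k\le1/(k+1)$, used to turn geometric into polynomial decay and produce the $(k+1)^{-2/3}$ term, would have to be restated for $\delta$ (harmless, but it is not what the lemma's constants are written in terms of). Neither issue damages the rate --- your argument does prove a bound of the same form $\mathcal{O}\big(\ln^2(\Kcal^*)\sigma^{-2\Kcal^*}/((1-\sigma)^2k^{2/3})\big)$ --- but to obtain the lemma with its stated constant you need the paper's decoupling of ``uniform boundedness of the deviations'' from ``contraction of $V_k$.'' (In fairness, the paper is itself loose here: its transient unrolling implicitly uses $\sigma+2\alpha_k\le1$, and \eqref{notation:K*} guarantees $\alpha_k\le\delta-\sigma$ while the proof uses $2\alpha_k\le\delta-\sigma$; still, the exponential-versus-linear dependence on $(\Kcal^*)^{1/3}$ is a real difference between the two routes, not a bookkeeping slip.)
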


\begin{proof}
Let $\xhat^{i} = x^{i} - \xbar$ and $\yhat^{i} = y^{i} - \ybar$. Since $\Wbf$ is doubly stochastic Eqs.\ \eqref{alg:x} and \eqref{alg:xbar} gives
\begin{align*}
\xhat_{k+1}^{i}
& = \sum_{j=1}^{N}w_{ij}\xhat_{k}^{j} - \alpha_{k}\Abf_{11}\xhat_{i}^{k}-\alpha_{k}\Abf_{12}\yhat_{k}^{i}+ \alpha_{k}(b_{1}^{i} - \bbar_{1}) -\alpha_{k}(\xi_{k}^{i} - \xibar_{k}),
\end{align*} 
which implies that 
\begin{align}
\Xhatbf_{k+1} &= \Wbf\Xhatbf_{k} -\alpha_{k}\Xhatbf_{k}\Abf_{11}^T - \alpha_{k}\Yhatbf_{k}\Abf_{12}^T +\alpha_{k}(\Bbf_{1} - \1 b_{1}^T) -\alpha_{k}(\Xi_{k} - \1\xibar_{k}^T).\label{lem_consensus:Eq1a}
\end{align}
Using Assumption \ref{assump:doub_stoch} and the Courant-Fisher theorem yields
$\|\Wbf\Xhatbf_{k}\| \leq \sigma_{\Wbf}\|\Xhatbf_{k}\|,   $ \cite{HJ1985}. Thus, taking the $2-$norm on both sides of \eqref{lem_consensus:Eq1a} and using Assumptions \ref{assump:matrix_bounded} and \ref{assump:noise} gives
\begin{align}
\|\Xhatbf_{k+1}\| \leq (\sigma_{\Wbf}+\alpha_{k})\|\Xhatbf_{k}\| + \alpha_{k}\|\Yhatbf_{k}\| + \sqrt{N}(R + C)\alpha_{k}. \label{lem_consensus:Eq1}    
\end{align}
Similarly, using Eqs.\ \eqref{alg:y} and \eqref{alg:ybar} we have 
\begin{align}
\|\Yhatbf_{k+1}\| \leq (\sigma_{\Vbf}+\beta_{k})\|\Yhatbf_{k}\| + \beta_{k}\|\Xhatbf_{k}\| + \sqrt{N}(R + C)\beta_{k}. \label{lem_consensus:Eq2}
\end{align}
By \eqref{notation:sigma}, $\sigma = \max\{\sigma_{\Vbf},\sigma_{\Wbf}\}\in(0,1)$, and by \eqref{notation:K*}, $\sigma + 2\alpha_{k}\leq \delta \in(\sigma, 1), \forall k\geq \Kcal^*.$ Then, adding Eq.\ \eqref{lem_consensus:Eq1} to Eq.\ \eqref{lem_consensus:Eq2} and using $\beta_{k}\ll\alpha_{k}$ yield
{\small
\begin{align*}
&\|\Xhatbf_{k+1}\|+\|\Yhatbf_{k+1}\|\\ 
&\leq (\sigma+2\alpha_{k}) (\|\Xhatbf_{k}\|+\|\Yhatbf_{k}\|) + 2\sqrt{N}(R+C)\alpha_{k}\leq \delta (\|\Xhatbf_{k}\|+\|\Yhatbf_{k}\|) + 2\sqrt{N}(R+C)\alpha_{k}\notag\\
&\leq \delta^{k+1-\Kcal^*}(\|\Xhatbf_{\Kcal^*}\|+\|\Yhatbf_{\Kcal^*}\|) + 2\sqrt{N}(R+C)\sum_{t=\Kcal^*}^{k}\alpha_{k}\delta^{k-t}\leq (\|\Xhatbf_{\Kcal^*}\|+\|\Yhatbf_{\Kcal^*}\|) + \frac{2\sqrt{N}(R+C)\alpha_{0}}{1-\delta}\cdot
\end{align*}}
Similarly, since $x_{0}^{i} = y_{0}^{i} = 0$ we can obtain
\begin{align*}
\|\Xhatbf_{\Kcal^*}\|+\|\Yhatbf_{\Kcal^*}\| &\leq (\sigma+2\alpha_{0})(\|\Xhatbf_{\Kcal^*-1}\|+\|\Yhatbf_{\Kcal^*-1}\|)+ 2\sqrt{N}(R+C)\alpha_{\Kcal^*-1}\notag\\
&\leq 2\sqrt{N}(R+C)\sum_{t=0}^{\Kcal^*-1}\alpha_{t}\leq 6\sqrt{N}(R+C)\alpha_{0}(\Kcal^*)^{1/3},
\end{align*}
where the last inequality is due to
$\sum_{t=0}^{\Kcal^*-1}\alpha_{t} \leq 3\alpha_{0}(\Kcal^*)^{1/3}.$  Thus, the two preceding relations give
\begin{align}
\|\Xhatbf_{\Kcal^*+1}\|+\|\Yhatbf_{\Kcal^*+1}\|  
&\leq \frac{6\sqrt{N}(R+C)\alpha_{0}(\Kcal^*)^{1/3}}{1-\delta}\cdot\label{lem_consensus:Eq2a}
\end{align}
We denote by $\gamma_{k} = \beta_{k}/\alpha_{k}$, a nonnegative and nonincreasing sequence since $\beta_{k}\ll\alpha_{k}$. Moreover, since $\beta_{0}\leq \alpha_{0}$, we have $\gamma_{k}\leq 1$. We next consider the residual function $V$ in \eqref{def:lyapunov_consensus}. Indeed, using Eqs.\ \eqref{lem_consensus:Eq1} and \eqref{lem_consensus:Eq2} and since $\gamma_{k+1}\leq \gamma_{k}\leq 1$ we have
\begin{align*}
V_{k+1} &= \|\Yhatbf_{k+1}\| + \gamma_{k+1}\|\Xhatbf_{k+1}\|\leq \|\Yhatbf_{k+1}\| + \gamma_{k}\|\Xhatbf_{k+1}\|\notag\\
&\leq (\sigma_{\Vbf}+\beta_{k})\|\Yhatbf_{k}\| + \beta_{k}\|\Xhatbf_{k}\| + 2\sqrt{N}(R + C)\beta_{k} + \sigma_{\Wbf}\gamma_{k}\|\Xhatbf_{k}\| + \beta_{k}\|\Xhatbf_{k}\| + \beta_{k}\|\Yhatbf_{k}\|\notag\\
&\leq \sigma V_{k} + 2\beta_{k}(\|\Yhatbf_{k}\| + \|\Xhatbf_{k}\|) + 2\sqrt{N}(R + C)\beta_{k},
\end{align*}
which by using Eq.\ \eqref{lem_consensus:Eq2a} and $\Dcal$ in \eqref{notation:D} we have for all $k\geq \Kcal^*$
{\small
\begin{align*}
V_{k+1} &\leq \sigma V_{k} + \Dcal\beta_{k}\leq \sigma^{k+1-\Kcal^*}V_{\Kcal^*} + \Dcal\!\!\sum_{t=\Kcal^*}^{k}\beta_{t}\sigma^{k-t}\leq \sigma^{k+1-\Kcal^*}V_{\Kcal^*} + \Dcal\!\!\sum_{t=\Kcal^*}^{\left\lfloor k/2 \right\rfloor}\beta_{t}\sigma^{k-t}+\Dcal\!\!\!\sum_{t=\left\lceil k/2 \right\rceil}^{k}\beta_{t}\sigma^{k-t}\notag\\
&\leq \sigma^{k+1-\Kcal^*}V_{\Kcal^*} + \frac{\Dcal\beta_{0}\sigma^{\lceil k/2\rceil}}{1-\sigma} + \frac{\Dcal\beta_{k/2}}{1-\sigma}\leq \sigma^{k+1-\Kcal^*}V_{\Kcal^*} + \frac{\Dcal\beta_{0}}{1-\sigma}\sigma^{\lceil k/2\rceil} + \frac{2\Dcal\beta_{0}}{(1-\sigma)}\frac{1}{k+1}\cdot
\end{align*}}
Moreover, since $x_{0}^{i} = y_{0}^{i} = 0$ implying $V_{0} = 0$, we have
\begin{align*}
V_{\Kcal^*} &\leq \sigma V_{\Kcal^*-1} + \Dcal\beta_{\Kcal^*-1}\leq \Dcal\beta_{0}\sum_{t=0}^{\Kcal^*-1}\frac{1}{t+1}\leq \Dcal\beta_{0}\ln(\Kcal^*).
\end{align*}
Combining these two relations immediately gives 
\begin{align*}
V_{k+1} &\leq \Dcal\beta_{0}\ln(\Kcal^*)\sigma^{k+1-\Kcal^*} + \frac{\Dcal\beta_{0}}{1-\sigma}\sigma^{\lceil k/2\rceil} + \frac{2\Dcal\beta_{0}}{(1-\sigma)}\frac{1}{k+1}\notag\\ 
&\leq \frac{2\Dcal\beta_{0}\ln(\Kcal^*)\sigma^{-\Kcal^*}}{1-\sigma}\sigma^{\lceil k/2\rceil} + \frac{2\Dcal\beta_{0}}{(1-\sigma)}\frac{1}{k+1}\cdot    
\end{align*}
Using the preceding relation, the definition of $V$ in \eqref{def:lyapunov_consensus}, and $(x+y)^2 \leq 2x^2 + 2y^2$ we obtain
\begin{align*}
&\sum_{i=1}^N \|y_{i}^{k}-\ybar_{k}\|^2 \leq \frac{4\Dcal^2\beta_{0}^2\ln^2(\Kcal^*)\sigma^{-2\Kcal^*}}{(1-\sigma)^2}\sigma^{k} + \frac{4\Dcal^2\beta_{0}^2}{(1-\sigma)^2(k+2)^2}.
\end{align*}
Similarly, we obtain
\begin{align*}
\frac{\beta_{k}}{\alpha_{k}}\sum_{i=1}^N \|x_{i}^{k}-\xbar_{k}\|^2 &\leq \frac{4\Dcal^2\beta_{0}^2\ln^2(\Kcal^*)\sigma^{-2\Kcal^*}}{(1-\sigma)^2}\frac{\sigma^{k}\alpha_{k}}{\beta_{k}} + \frac{4\Dcal^2\beta_{0}^2}{(1-\sigma)^2}\frac{\alpha_{k}}{(k+2)^2\beta_{k}}\notag\\
&\leq \frac{4\Dcal^2\beta_{0}\alpha_{0}\ln^2(\Kcal^*)\sigma^{-2\Kcal^*}}{(1-\sigma)^2}\frac{1}{(k+1)^{2/3}} + \frac{4\Dcal^2\beta_{0}\alpha_{0}}{(1-\sigma)^2}\frac{1}{(k+2)^{5/3}},
\end{align*}
where recall that we assume   $\sigma^{k} \leq 1/(k+1)$. Adding the preceding two relations give Eq.\ \eqref{lem_consensus:Ineq}.   
\end{proof}
We next utilize the following result about the convergence of of $(\xbar_{k},\ybar_{k})$ to the solutions $(x^*,y^*)$.
\begin{lemma}[Theorem $1$ in  \cite{DoanM2019}]\label{lem:optimal}
Suppose that all assumptions and step sizes in Theorem \ref{thm:rates} hold. Then there exists two absolute constants $\Dcal_{0}$ and $\Dcal_{1}$ such that 
\begin{align}
\Eset[\|\ybar_{k}-y^*\|^2] + \frac{\beta_{k}}{\alpha_{k}}\Eset[\|\xbar_{k}-x^*\|^2] \leq \frac{\Dcal_{0}}{(k+1)^{2/3}} + \frac{\Dcal_{1}\ln(k+1)}{k+1}\cdot\label{lem_optimal:Ineq}    
\end{align}
\end{lemma}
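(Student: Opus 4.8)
Since Lemma~\ref{lem:optimal} is quoted verbatim as Theorem~1 of \cite{DoanM2019} and its proof is omitted here, the plan is to reconstruct the convergence argument for the centralized two-time-scale recursion \eqref{alg:xbar}--\eqref{alg:ybar}, in which $\xbar_{k}$ evolves on the fast scale $\alpha_{k}$ and $\ybar_{k}$ on the slow scale $\beta_{k}$. First I would decouple the two scales by a change of variables built around the fast quasi-equilibrium: for a frozen slow variable $y$, the fast drift vanishes at $\Abf_{11}^{-1}(\bbar_{1}-\Abf_{12}y)$, so I set
\begin{align*}
\wtilde{y}_{k} = \ybar_{k}-y^*, \qquad \wtilde{x}_{k} = \xbar_{k}-\Abf_{11}^{-1}(\bbar_{1}-\Abf_{12}\ybar_{k}),
\end{align*}
both of which vanish at $(x^*,y^*)$ by \eqref{notation:xy*}. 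Substituting $\xbar_{k}=\wtilde{x}_{k}+\Abf_{11}^{-1}(\bbar_{1}-\Abf_{12}\ybar_{k})$ into \eqref{alg:ybar} collapses its drift onto the Schur complement $\Delta=\Abf_{22}-\Abf_{21}\Abf_{11}^{-1}\Abf_{12}$, and differencing the equilibrium map across one step feeds the slow increment back into the fast error, yielding
\begin{align*}
\wtilde{y}_{k+1} &= (\Ibf-\beta_{k}\Delta)\wtilde{y}_{k} - \beta_{k}\Abf_{21}\wtilde{x}_{k} - \beta_{k}\psibar_{k},\\
\wtilde{x}_{k+1} &= (\Ibf-\alpha_{k}\Abf_{11})\wtilde{x}_{k} - \alpha_{k}\xibar_{k} - \beta_{k}\Abf_{11}^{-1}\Abf_{12}\big(\Delta\wtilde{y}_{k}+\Abf_{21}\wtilde{x}_{k}+\psibar_{k}\big).
\end{align*}
The structural payoff is that each recursion is now a contraction (Assumption~\ref{assump:matrix_negativity} makes $\Abf_{11}$ and $\Delta$ positive definite, so $\|\Ibf-\alpha\Abf_{11}\|^{2}\le 1-c_{1}\alpha$ and $\|\Ibf-\beta\Delta\|^{2}\le 1-c_{2}\beta$ for small step sizes) perturbed only by $O(\beta_{k})$ cross-coupling and by zero-mean noise.

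Second, I would square, take expectations, and combine the two bounds through a weighted potential analogous to the one in Lemma~\ref{lem:consensus}, namely $W_{k}=\Eset[\|\wtilde{y}_{k}\|^{2}]+\gamma_{k}\Eset[\|\wtilde{x}_{k}\|^{2}]$ with $\gamma_{k}=\beta_{k}/\alpha_{k}$. Independence and zero mean of $(\xibar_{k},\psibar_{k})$ (Assumption~\ref{assump:noise}) kill every noise cross term and leave only the variances from \eqref{assump:variance}, contributing $O(\beta_{k}^{2})$ to the slow block and $O(\alpha_{k}^{2})$ to the fast block. I would bound each $\wtilde{x}$--$\wtilde{y}$ cross term by Young's inequality, splitting it so that the $\alpha_{k}$-contraction of the fast block and the $\beta_{k}$-contraction of the slow block each absorb their own coupling at the cost of lower-order residuals; using that $\gamma_{k}$ is nonincreasing and slowly varying (as in Lemma~\ref{lem:consensus}) these assemble into a scalar recursion of the form
\begin{align*}
W_{k+1}\le (1-c\beta_{k})W_{k} + \Dcal_{0}'\,\alpha_{k}\beta_{k} + \Dcal_{1}'\,\beta_{k}^{2}
\end{align*}
for some $c>0$. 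With $\alpha_{k}=\alpha_{0}(k+1)^{-2/3}$ and $\beta_{k}=\beta_{0}(k+1)^{-1}$ the forcing is $O((k+1)^{-5/3})$ against the contraction factor $1-c\beta_{0}/(k+1)$, and a step-size-specific induction then produces the leading rate $(k+1)^{-2/3}$ together with the logarithmic correction $\ln(k+1)/(k+1)$. Finally, since $\xbar_{k}-x^*=\wtilde{x}_{k}-\Abf_{11}^{-1}\Abf_{12}\wtilde{y}_{k}$ and $\gamma_{k}\le 1$, the weighted error on the left of \eqref{lem_optimal:Ineq} is bounded by a constant multiple of $W_{k}$, giving the stated inequality.

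The hard part will be the two-time-scale coupling rather than either contraction in isolation: the fast error re-enters the slow recursion through $\Abf_{21}\wtilde{x}_{k}$ while the slow motion injects an $O(\beta_{k})$ drift back into the fast error, so obtaining a genuine one-step contraction for $W_{k}$ forces the weight $\gamma_{k}=\beta_{k}/\alpha_{k}$ to be matched to the step-size ratio and requires the time-scale separation $\beta_{k}/\alpha_{k}\to 0$ to render the coupling a strictly lower-order perturbation. Verifying that the Young's-inequality constants can be chosen uniformly in $k$, and that the slow variation of $\gamma_{k}$ (with $\gamma_{k+1}\le\gamma_{k}$) does not spoil the contraction, is the crux; converting the contraction recursion into the exact exponents $(k+1)^{-2/3}$ and $\ln(k+1)/(k+1)$ is then the step-size-tuned bookkeeping carried out in \cite{DoanM2019}.
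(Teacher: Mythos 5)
The paper never actually proves this lemma: it is imported verbatim as Theorem~1 of \cite{DoanM2019}, and the text explicitly defers this entire step to that reference. So the comparison must be against the cited proof, and your skeleton is the same one used there: the change of variables to the slow error $\wtilde{y}_{k}=\ybar_{k}-y^*$ and the fast quasi-equilibrium error $\wtilde{x}_{k}$, the Schur complement $\Delta$ emerging as the slow drift, the coupled linear recursions (which you derive correctly, including the $-\beta_{k}\Abf_{11}^{-1}\Abf_{12}(\Delta\wtilde{y}_{k}+\Abf_{21}\wtilde{x}_{k}+\psibar_{k})$ feedback into the fast error), and the recombination $\xbar_{k}-x^*=\wtilde{x}_{k}-\Abf_{11}^{-1}\Abf_{12}\wtilde{y}_{k}$ at the end. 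This is the right architecture, not a different route.

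The gap sits exactly at the step you yourself call the crux, and it is not a formality: the claimed recursion $W_{k+1}\le(1-c\beta_{k})W_{k}+\Dcal_{0}'\alpha_{k}\beta_{k}+\Dcal_{1}'\beta_{k}^{2}$ cannot be obtained, for all $k$ and with the weight $\gamma_{k}=\beta_{k}/\alpha_{k}$, by any choice of Young parameters, fixed or time-varying. Expanding the slow block produces the coupling term $2\beta_{k}\Eset[|\wtilde{y}_{k}^{T}(\Ibf-\beta_{k}\Delta)^{T}\Abf_{21}\wtilde{x}_{k}|]\le\beta_{k}\eta\,\Eset[\|\wtilde{y}_{k}\|^{2}]+\beta_{k}\eta^{-1}\|\Abf_{21}\|^{2}\Eset[\|\wtilde{x}_{k}\|^{2}]$, and the only negative term available to absorb the second piece is the weighted fast contraction $2c_{1}\gamma_{k}\alpha_{k}\Eset[\|\wtilde{x}_{k}\|^{2}]=2c_{1}\beta_{k}\Eset[\|\wtilde{x}_{k}\|^{2}]$, which is of the \emph{same} order $\beta_{k}$. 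One is therefore forced into $\|\Abf_{21}\|^{2}/(2c_{1})\le\eta\le 2c_{2}$, i.e., $\|\Abf_{21}\|^{2}\le 4c_{1}c_{2}$, a condition on the problem data that Assumptions~\ref{assump:matrix_negativity}--\ref{assump:matrix_bounded} do not supply ($c_{1},c_{2}$ can be arbitrarily small while $\|\Abf_{21}\|$ is of order one); and since both competing terms scale identically in $\beta_{k}$, no decaying choice $\eta_{k}$ repairs this. What does work --- and is in effect what the cited proof does --- is either (i) inflating the weight to $M\gamma_{k}$ with $M\gtrsim\|\Abf_{21}\|^{2}/(c_{1}c_{2})$, at the price that the fast block's own coupling terms are multiplied by $M$, so the contraction only holds for $k\ge k^{*}$ with $\gamma_{k^{*}}$ small enough, which in turn requires a separate boundedness argument for the head $k<k^{*}$; or (ii) a bootstrap: first solve the fast recursion alone (where the coupling \emph{is} absorbable, since the $\beta_{k}$-coupling meets an $\alpha_{k}$-contraction) to get $\Eset[\|\wtilde{x}_{k}\|^{2}]=O(\alpha_{k}+\gamma_{k}^{2})$, then feed this into the slow recursion as a known forcing of order $\alpha_{k}\beta_{k}$. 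Your sketch contains neither device. A secondary omission: extracting the exponents $(k+1)^{-2/3}$ and $\ln(k+1)/(k+1)$ from the final recursion requires $c\beta_{0}$ to be sufficiently large, so the constants $\Dcal_{0},\Dcal_{1}$ are tied to the step-size conditions of \cite{DoanM2019} rather than being truly absolute, consistent with the paper's remark deferring the choice of $\alpha_{0},\beta_{0}$ to that reference.
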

Using the results in Lemmas \ref{lem:consensus} and \ref{lem:optimal}, we immediately have the proof of Theorem \ref{thm:rates} as follows. By using Eqs.\ \eqref{lem_consensus:Ineq} and \eqref{lem_optimal:Ineq} we obtain Eq.\ \eqref{thm_rates:Ineq}, i.e.,
\begin{align*}
&\frac{1}{N}\sum_{i=1}^{N}\big(\Eset[\|y_{k}^{i}-y^*\|^2] + \frac{\beta_{k}}{\alpha_{k}}\Eset[\|x_{k}^{i}-x^*\|^2]\big) \notag\\
&\leq \frac{2}{N}\sum_{i=1}^{N}\big(\Eset[\|y_{k}^{i}-\ybar_{k}\|^2 + \Eset[\|\ybar_{k}-y^*\|^2] + \frac{\beta_{k}}{\alpha_{k}}\Eset[\|x_{k}^{i}-\xbar_{k}\|^2] + \frac{\beta_{k}}{\alpha_{k}}\Eset[\|\xbar_{k}-x^*\|^2]] \big)\notag\\
&\leq \frac{16\Dcal^2\beta_{0}\alpha_{0}\ln^2(\Kcal^*)\sigma^{-2\Kcal^*}}{N(1-\sigma)^2(k+1)^{2/3}} + \frac{16\Dcal^2\beta_{0}\alpha_{0}}{N(1-\sigma)^2(k+2)^{5/3}} +  \frac{2\Dcal_{0}}{(k+1)^{2/3}} + \frac{2\Dcal_{1}\ln(k+1)}{k+1}\cdot
\end{align*}
\begin{remark}
Note that the analysis studied in this paper can be extended to cover the case when each node $i$ knows a different matrix $\Abf^{i}$, i.e., associated with each node $i$ is a matrix $\Abf^{i}$ and a vector $\bbf^{i}$ 
\begin{align*}
\Abf^{i} = \left[\begin{array}{cc}
\Abf_{11}^{i}     &  \Abf_{12}^{i}\\
\Abf_{21}^{i}     & \Abf_{22}^{i}
\end{array}\right]\in\Rset^{2d\times 2d},\qquad \bbf^{i} = \left[\begin{array}{c}
     b_{1}^{i} \\
     b_{2}^{i} 
\end{array}\right]\in\Rset^{2d}.
\end{align*}
The goal of the nodes is to cooperatively find the solution $(x^*,y^*)$ of the linear equations
\begin{align}
\begin{aligned}
\sum_{i=1}^{N}\Abf_{11}^{i}x^* + \Abf_{21}^{i} y^*  -   b_{1}^{i} = 0,\quad\text{and}\quad
\sum_{i=1}^{N}\Abf_{21}^{i}x^* + \Abf_{22}^{i}y^* - b_{2}^{i} = 0  .\label{notation:xy*}
\end{aligned}    
\end{align}
However, an additional projection step to a compact set $\Xcal$ containing $(x^*,y^*)$ is needed in this case
\begin{align*}
x_{k+1}^{i} &= \left[\sum_{j=1}^{N}w_{ij}x_{k}^{j} - \alpha_{k}(\Abf_{11}^{i}x_{k}^{i} + \Abf_{12}^{i}y_{k}^{i} - b_{1}^{i} + \xi_{k}^{i})\right]_{\Xcal}\\
y_{k+1}^{i} &= \left[\sum_{j=1}^{N}v_{ij} y_{k}^{j} - \beta_{k}(\Abf_{21}x_{k}^{i} + \Abf_{22}^{i}y_{k}^{i} - b_{2}^{i} + \psi_{k}^{i})\right]_{\Xcal}.
\end{align*}  
Such a projection step is often used in the context of distributed optimization \cite{DoanBS2017,DoanMR2018b}. However, this step may not be practical in reinforcement learning since we may not have reasonable knowledge to decide $\Xcal$.
\end{remark}

\section{Concluding Remarks}
In this paper, we propose a distributed variant of the two-time-scale {\sf SA} for finding the root of a system of two linear equations. Our main contribution is to provide a finite-time analysis of the proposed method, where we show that this method converges at a rate $\mathcal{O}(1/k^{2/3})$. Some other interesting problems including the rates of  nonlinear counterparts \cite{ChenZDMC2019}, impacts of Markovian noise \cite{GuptaSY2019_twoscale}, and applications to study the finite-time analysis  of multi-agent reinforcement learning algorithms could be our future studies.   
\newpage
\section*{Acknowledgment}
This work was supported in part by ARL DCIST CRA W911NF-17-2-018, the Semiconductor Research Corporation (SRC), and DARPA.

\bibliographystyle{IEEEtran}
\bibliography{refs}

\end{document}